\numberwithin{equation}{section}
\newtheorem{lem}{Lemma}[section]%
\newtheorem{theorem}[lem]{Theorem}%
\newtheorem{cor}[lem]{Corollary}%
\newtheorem{problem}[lem]{Problem}%
\newtheorem{prop}[lem]{Proposition}%
  \def\G{\Gamma}
\def\nd{\mathrel{\bigm|\kern-.7em/}}
\def\f{\noindent}
\def\Aut{\hbox{\rm Aut}}
\def\Cay{\hbox{\rm Cay}}
\def\mz{{\mathbb Z}}
\begin{document}
\title[]{On oriented $m$-semiregular representations of finite groups about valency two}

\author{Jia-Li Du}
\address{Jia-Li Du, School of Mathematics, China University of Mining and Technology, Xuzhou 221116, China}
\email{dujl@cumt.edu.cn}

\author{Young Soo Kwon}
\address{Young Soo Kwon, Mathematics, Yeungnam University, Kyongsan 712-749, Republic of Korea}
\email{ysookwon@ynu.ac.kr}

\author{Da-Wei Yang$^*$}
\address{Da-Wei Yang, School of Science, Beijing University of Posts and Telecommunications, Beijing 100876, China}
\email{dwyang@bupt.edu.cn}

\date{}
 \maketitle

\begin{abstract}
Given a group $G$, an {\em $m$-Cayley digraph $\G$ over $G$} is a digraph that has a group of automorphisms isomorphic to $G$ acting semiregularly on the vertex set with $m$ orbits. We say that $G$ admits an {\em oriented $m$-semiregular representation} (O$m$SR for short), if there exists a regular $m$-Cayley digraph $\G$ over $G$ such that $\G$ is oriented and its automorphism group is isomorphic to $G$. In particular, O$1$SR is also named as ORR. Verret and Xia gave a classification of finite simple groups admitting an ORR of valency two in [Ars Math. Contemp. 22 (2022), \#P1.07].
Let $m\geq 2$ be an integer. In this paper, we show that all finite groups generated by at most two elements admit an O$m$SR of valency two except four groups of small orders. Consequently, a classification of finite simple groups admitting an O$m$SR of valency two is obtained.

\bigskip

\f {\bf Keywords:} Semiregular group, regular representation, $m$-Cayley digraph, oriented $m$-semiregular representation.

\medskip
\f {\bf 2010 Mathematics Subject Classification:} 05C25, 20B25.
\end{abstract}

\section{Introduction}

By a {\em digraph} $\Gamma$, we mean an ordered pair $(V, A)$ where the vertex set
$V$ is a non-empty set and the arc set $A$ is a binary relation on $V$, that is,
$A \subseteq V \times V$. The elements of $V$ and $A$ are called vertices and arcs
of $\Gamma$, respectively. For simplicity, we write $V(\Gamma):=V$ and $A(\Gamma):=A$.
The digraph $\Gamma $ is
a {\em graph} if the binary relation $A$ is symmetric, that is, $A=\{(v,u)\ |\ (u,v)\in A\}$.
A digraph is called {\em regular} if each vertex has the same in- and out-valency. For a vertex $u \in V(\Gamma)$, let $\Gamma^{+}(u)$ be the set of vertices $v$ such that $(u,v) \in A(\Gamma)$.
Throughout this paper, all digraphs are regular, and all digraphs and groups are finite.

Let $\Gamma$ be a digraph, and let $\omega \in V(\Gamma)$.  An automorphism of $\Gamma$ is a permutation $\sigma$ of $V(\Gamma)$ fixing $A(\Gamma)$ setwise, that is,
$(x^\sigma , y^\sigma ) \in A(\Gamma)$ if and only if $(x, y) \in A(\Gamma)$. The set of all automorphisms of $\Gamma$, with the operation of composition, is  {\em the full automorphism group} of $\G$, denoted by $\Aut(\Gamma)$. For a subgroup $G$ of $\Aut(\Gamma)$,  denote by $G_\omega$ the stabilizer of $\omega$ in $G$, that is, the subgroup of $G$ fixing $\omega$. We say that $G$ is {\em semiregular} on $V(\Gamma)$ if $G_\omega = 1$ for every $\omega \in V(\Gamma)$, and {\em regular} if it is semiregular and transitive.

Let $G$ be a group, and let $S$ be a subset of $G$.
The {\em Cayley digraph} $\G:=\Cay(G,S)$ is the digraph with $V(\Gamma):=G$ and
$A(\G):=\{(g,sg) ~|\ g\in G,s\in S\}$. In particular, $\G$ is a Cayley graph if
and only if $S=S^{-1}$. The right regular representation $R(G)$ of $G$ gives rise to an
embedding of $G$ into $\Aut(\Gamma)$, that is, $R(G)$ is a subgroup of $\Aut(\Gamma)$.
If $\Aut(\Gamma)$ actually coincides with $R(G)$, then the (di)graph $\Gamma$ is
said to be a {\em {\rm(}di{\rm)}graphical
regular representation over $G$}, and this is usually abbreviated to GRR (or DRR). GRRs (or DRRs) offer a natural way to represent groups geometrically and combinatorially as groups of automorphisms of (di)graphs~\cite{MorrisSpiga1}.
The problem that determining the finite groups admitting a GRR or DRR has a long history. The complete classification of finite groups admiting a DRR has been given by Babai~\cite{Babai}, and it was shown that except for five small groups, every group admits a DRR. Clearly, if
a group $G$ admits a GRR, then $G$ admits a DRR, however the converse is not true. The GRR turned out to be much more difficult to handle and, after a long series of partial results by various authors~\cite{Hetzel,Imrich,ImrichWatkins,ImrichWatkins2,NowitzWatkins1,NowitzWatkins2,Watkins}, the classification was completed by Godsil in~\cite{Godsil}.
Once the classifications of DRRs and GRRs were completed, researchers proposed and investigated various natural generalizations. In the following paragraphs, we mainly introduce some background about oriented $m$-semiregular representation; one may see~\cite{MorrisSpiga1,MorrisSpiga3,Spiga} for the introduction of some other generalizations.

{\bf Oriented regular representation, i.e. ORR}. A digraph is {\em oriented} if it does not have both oppositely directed arcs between any pairs of vertices. An \emph{oriented Cayley digraph} is in some sense a proper digraph.
More formally, it is a Cayley digraph $\Cay(G, S)$ whose connection set $S$ has the
property that $S \cap S^{-1} =\emptyset$. Equivalently, in graph-theoretic terms,
it is a digraph with no digons, see~\cite{MorrisSpiga1}. We say that a group $G$ admits an {\em oriented regular representation} (ORR for short) if there exists an oriented Cayley digraph $\G=\Cay(G,S)$ over $G$ such that $\Aut(\G)=R(G)$. In \cite[Problem 2.7]{Babai}, Babai asked which groups admit an ORR, and this problem was finally settled by Morris and Spiga in three papers~\cite{MorrisSpiga1,MorrisSpiga3,Spiga}.

{\bf Oriented $m$-semiregular representation, i.e. O$m$SR}.  The concept of Cayley digraphs can be nicely generalized to $m$-Cayley digraphs where regular actions are replaced with semiregular actions.
An $m$-Cayley (di)graph $\Gamma$ over a finite group $G$ is defined as a (di)graph which has a semiregular group of automorphisms
isomorphic to $G$ with $m$ orbits on its vertex set. Actually, $1$-Cayley (di)graphs are the usual Cayley (di)graphs and, $2$-Cayley (di)graphs
are also called bi-Cayley (di)graphs, see \cite{Kovacs}.
We say that a group $G$ admits a {\em $($di$)$graphical $m$-semiregular representation} (G$m$SR and D$m$SR, for short),
if there exists a regular $m$-Cayley (di)graph $\Gamma$ over $G$ such that $\Aut(\Gamma)=R(G)$, and admits an {\em oriented $m$-semiregular representation}
($\rm O$$m$$\rm SR$ for short), if there exists a regular $m$-Cayley digraph $\G$ over $G$ such that it is oriented and $\Aut(\G)=R(G)$.
In particular, G$1$SRs and D$1$SRs are the usual GRRs and DRRs, respectively.
The groups admitting G$m$SRs, D$m$SRs or O$m$SRs for every positive integer $m$ were determined in~\cite{DFS,DuFengSpiga}.

{\bf Oriented $m$-semiregular representation of prescribed valency}.
In contrast to unrestricted G$m$SRs, D$m$SRs and O$m$SRs, the classification of groups admitting G$m$SRs, D$m$SRs, or O$m$SRs of prescribed valency is largely open.

Since a connected graph of valency one or two is an edge or a cycle, the smallest interesting case is of valency three. The study on cubic GRR has begun with
a piece of Godsil's work~\cite{G}, which confirmed the existence of cubic GRR of the symmetric and alternating group. Recently, lots of research has been done on finite simple groups admitting a cubic GRR.  For example, Xia and Fang~\cite{XiaF,  Xia1,Xia}  studied some simple groups of Lie type admitting a cubic GRR. Moreover, Spiga~\cite{Spiga2} gave some sufficient conditions for a cubic Cayley graph of a non-abelian simple group to be a GRR.

For digraphs, the smallest interesting case is of valency two, as a connected digraph of valency one is just a directed cycle. Recently, Verret and Xia~\cite{VerretXia} classified simple groups admitting an ORR (i.e. O1RR) of valency two; every simple group of order at least 5 has an ORR of valency two.
This work prompts us to further consider the simple groups admitting an O$m$SR of valency two for $m\geq2$. Due to the classification of finite simple groups, we know that every finite simple
group can be generated by two elements, one can see~\cite[Corollary]{GKantor}.
In this paper, we consider O$m$SRs of valency two for groups generated
by at most two elements, where $m\geq2$.
Our main results are the following theorem.

%

\begin{theorem}\label{theo=main2}
Let $G$ be a finite group generated by at most two elements, and let $m\geq 2$ be an integer. Then $G$ admits no O$m$SR of valency two if and only if one of the following occurs:
\begin{enumerate}
\item\label{part1} $m=2$ and $G\cong \mz_1$, $\mz_2$ or $\mz_2^2$;
\item\label{part2} $3\leq m\leq 6$ and $G\cong \mz_1$.
\end{enumerate}

\end{theorem}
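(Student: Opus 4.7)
The plan is to establish the two directions separately. For the ``only if'' direction (necessity of the exceptions), an O$m$SR of valency two over the trivial group $\mz_1$ is simply a $2$-regular oriented digraph on $m$ vertices with trivial automorphism group. The counting bound that an oriented digraph on $m$ vertices has at most $\binom{m}{2}$ arcs, while $2$-regularity forces $2m$ arcs, already excludes $m\in\{2,3,4\}$; for $m=5,6$ a short enumeration of the $2$-regular oriented digraphs (respectively a unique regular tournament on $5$ vertices and a handful of digraphs on $6$ vertices) shows that each one admits a nontrivial automorphism. The cases $(m,G)=(2,\mz_2)$ and $(2,\mz_2^2)$ are disposed of by listing the three valency patterns $(|T_{00}|,|T_{01}|,|T_{10}|,|T_{11}|)\in\{(2,0,0,2),(1,1,1,1),(0,2,2,0)\}$ compatible with regular valency two and verifying, patttern by pattern, that every oriented choice forces $\Aut(\Gamma)$ to contain an extra involution swapping the two parts or exchanging oppositely labelled arcs.

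For the ``if'' direction, we must construct, for each remaining pair $(m,G)$, an oriented valency-two $m$-Cayley digraph $\Gamma$ over $G$ with $\Aut(\Gamma)=R(G)$. Write $G=\langle a,b\rangle$ (permitting $b=1$ when $G$ is cyclic) and describe $\Gamma$ via connection sets $T_{ij}\subseteq G$, where an arc $(g,i)\to (tg,j)$ is drawn for each $t\in T_{ij}$. The unifying recipe I would use is a ``spine-plus-decoration'' template: take a cyclic spine $T_{i,i+1}=\{1\}$ (indices mod $m$) contributing out-valency one per vertex, and add exactly one further out-arc per part, choosing the decoration to encode the generators $a$ and $b$. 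Concretely, for $m\geq 7$ one may set $T_{0,0}=\{a\}$, $T_{\lfloor m/2\rfloor,\lfloor m/2\rfloor}=\{b\}$, and distribute the remaining decorations among the other parts as short within-part arcs labelled by powers of $a$ and $b$, chosen so that each part is distinguishable by a local invariant. Orientedness amounts to $a,b$ having order at least three (and the various $T_{ii}$'s being disjoint from their inverses), which is always arrangeable unless $G$ is generated exclusively by involutions, a special case to be revisited.

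The verification that $\Aut(\Gamma)=R(G)$ splits in two steps. First, one shows that $\Aut(\Gamma)$ preserves the partition $\{G_0,\ldots,G_{m-1}\}$: this is achieved by exhibiting an invariant (such as the length of the shortest directed cycle through a vertex, or the local in/out-degree pattern after deletion of the spine) that distinguishes different parts. Second, since $R(G)$ already acts regularly on each part, it suffices to prove that the stabiliser in $\Aut(\Gamma)$ of the base vertex $(1,0)$ is trivial. Any such automorphism $\sigma$ must fix the two out-arcs from $(1,0)$, hence fix $(a,0)$ and the first spine neighbour; iterating around the spine, $\sigma$ must fix $(b,j)$ at the second decoration, and more generally any word in $a$ and $b$, so $\sigma$ fixes the orbit of $(1,0)$ under $\langle a,b\rangle = G$, namely $G_0$, and then by the spine propagation every $G_i$ pointwise.

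The principal obstacle is the residual narrow range $m\in\{2,3,4,5,6\}$ together with small or involution-generated groups $G$, where the spine template either does not close up uniformly or collides with the orientedness constraint $a\neq a^{-1}$. For these cases I would work out ad hoc constructions: for $m=2$ (bi-Cayley), select the pattern $(T_{00},T_{01},T_{10},T_{11})=(\{a,ab\},\emptyset,\emptyset,\{b,ba\})$ whenever $G$ admits suitable elements of order $\geq 3$, and for $G$ generated only by involutions replace the generating pair by $(a,ab)$ to acquire an element of larger order, then adapt Babai's DRR argument to the bi-Cayley setting to pin down $\Aut(\Gamma)$. For $m\in\{3,4,5,6\}$ with $G\neq\mz_1$, small modifications of the spine (e.g.\ using a non-cyclic spine that is a disjoint union of a short cycle and a matching, or shifting one decoration by an extra factor of $a$) suffice, and the automorphism analysis remains structurally the same. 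Assembling these small-case constructions and checking each one by hand is the most delicate part of the argument.
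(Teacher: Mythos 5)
There are genuine gaps in the construction (``if every exception is excluded then an O$m$SR exists'') half of your argument. First, your claim that orientedness ``is always arrangeable unless $G$ is generated exclusively by involutions'' by taking both generators of order at least three is false: in a dihedral group every element outside the cyclic rotation subgroup is an involution, so \emph{every} generating pair contains an involution, and your fix of passing to $(a,ab)$ still leaves one generator of order $2$. Since your template places \emph{both} generators (and powers of them) on diagonal sets $T_{i,i}$, an involution there immediately creates digons. The paper's constructions avoid exactly this by using only one element of order $\geq 3$ on the diagonals and letting the second generator $b$ label the wrap-around cross arc $T_{m-1,0}=\{b\}$, which imposes no order condition on $b$. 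Second, your ad hoc $m=2$ pattern $(T_{0,0},T_{0,1},T_{1,0},T_{1,1})=(\{a,ab\},\emptyset,\emptyset,\{b,ba\})$ has no arcs between the two parts, so the digraph is disconnected and its automorphism group contains the direct product of the automorphism groups of the two components, hence has order at least $|G|^2$; it can never be an O$2$SR. Third, the positive direction also requires showing that $\mz_1$ (for $m\geq 7$), $\mz_2$ and $\mz_2^2$ (for $m\geq 3$) \emph{do} admit O$m$SRs of valency two; your template cannot produce these since these groups have no element of order $\geq 3$, and your proposal never supplies alternative constructions (the paper disposes of precisely these cases by quoting a known result, its Proposition~\ref{prop=OmSR}).

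Beyond these errors, the load-bearing steps are left unspecified: ``distribute the remaining decorations \ldots chosen so that each part is distinguishable by a local invariant'' and ``small modifications of the spine suffice for $m\in\{3,\dots,6\}$'' are exactly the points where the real work lies, and the paper resolves them with completely explicit connection sets (cyclic, abelian two-generated, non-abelian two-generated) together with a concrete second-out-neighbourhood count $|\G_m^+(\G_m^+(1_i))|$ that pins down the block $G_0\cup G_{m-1}$, after which the out-neighbour pattern of the parts and connectivity force trivial vertex stabilizers and $A=R(G)$ by the Frattini argument. Your overall architecture (spine plus decorations, partition preservation via invariants, trivial stabilizer, small cases by hand) matches the paper's, but as written the proposal fails on dihedral-type groups, on the $m=2$ pattern, and on the three small groups that must be shown to have O$m$SRs for large $m$.
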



%

Consequently, a complete classification of simple groups admitting an O$m$SR of valency two can be obtained.

\begin{cor}\label{theo=main4}
Let $G$ be a finite simple group, and let $m\geq2$ be an integer.
Then $G$ admits an O$m$SR of valency two except $(m,G)=(2,\mz_2)$.
\end{cor}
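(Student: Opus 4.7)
The plan is to deduce Corollary~\ref{theo=main4} as an immediate consequence of Theorem~\ref{theo=main2}, by restricting attention to the class of finite simple groups and invoking the classical fact that every finite simple group is generated by two elements. More precisely, by the classification of finite simple groups together with \cite[Corollary]{GKantor} (cited in the text just before Theorem~\ref{theo=main2}), any finite simple group $G$ is $2$-generated, so the hypothesis of Theorem~\ref{theo=main2} is satisfied for every finite simple group and every integer $m\geq 2$.

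The second step is to intersect the exception list appearing in Theorem~\ref{theo=main2} with the class of (nontrivial) finite simple groups. The exceptions produced by that theorem are exactly the pairs $(m,G)$ with either $m=2$ and $G\cong \mz_1,\mz_2,\mz_2^2$, or $3\leq m\leq 6$ and $G\cong \mz_1$. Among the groups $\mz_1,\mz_2,\mz_2^2$, the trivial group $\mz_1$ is not simple (by the usual convention that a simple group is nontrivial), and $\mz_2^2$ is not simple either since it has proper nontrivial normal subgroups. The group $\mz_2$ is the unique simple group appearing in the whole exception list, and it only occurs for $m=2$. For $m\geq 7$ the exception list is empty, so every $2$-generated group, in particular every finite simple group, admits an O$m$SR of valency two.

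Putting these observations together, the only pair $(m,G)$ with $m\geq 2$ and $G$ a finite simple group for which Theorem~\ref{theo=main2} fails to provide an O$m$SR of valency two is $(m,G)=(2,\mz_2)$, which is precisely the exception claimed in Corollary~\ref{theo=main4}. There is essentially no technical obstacle at this stage: all the difficulty of the problem is absorbed into Theorem~\ref{theo=main2} itself, and the corollary is a short bookkeeping argument once that theorem and the $2$-generation of simple groups are in hand.
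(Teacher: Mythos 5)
Your proposal is correct and is exactly the paper's intended derivation: apply Theorem~\ref{theo=main2} using the fact (from the classification and \cite{GKantor}) that every finite simple group is $2$-generated, then note that the only simple group in the exception list is $\mz_2$, occurring only for $m=2$. Nothing further is needed.
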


We can see from Theorem~\ref{theo=main2} that almost all finite groups generated by two elements have an O$m$SR of valency two when $m\geq2$.
However, this is not true when $m=1$. A counterexample is the dihedral group, one may also see~\cite{Babai}. Moreover, the abelian group $\mz_n\times \mz_n$ also has no ORR of valency two, since it has an automorphism interchanging the two generators for each minimal generating set. To end this section, we would like to propose the following problem.

\begin{problem}\label{question}
Classify finite groups generated by at most two elements admitting an ORR of valency two.
\end{problem}

\section{Preliminaries and notations}

Let $m$ be a positive integer, and let $G$ be a group. Consistently throughout the whole paper, for not making our notation too cumbersome to use, we denote the element $(g,i)$ of the cartesian product $G\times\{0,\ldots,m-1\}$ simply by $g_i$. We often identify $\{0,\ldots,m-1\}$ with $\mathbb{Z}_m$, that is, with the integers modulo $m$.

For every $i,j\in\mathbb{Z}_m$, let $T_{i,j}$ be a subset of $G$. The {{\em $m$-Cayley digraph}} of $G$ with respect to $(T_{i,j}: i,j\in\mz_m)$ is the digraph with vertex set $$G\times \mathbb{Z}_m=\bigcup_{i\in\mz_m} G_i,$$ where $G_i=\{g_i\ |\ g\in G\}$, and with arc set
 $$\bigcup_{i,j} \{(g_i, (tg)_j)~|~t\in T_{i,j},g\in G\}.$$
We denote this digraph by
$$\Cay(G,T_{i,j}: i,j\in\mz_m).$$
For any given $g\in G$, the right multiplication $R(g)$ mapping each vertex $x_i\in G_i$ to $(xg)_i\in G_i$ for all $i\in\mz_m$, is an automorphism of $\G$, and $R(G)=\{ R(g)\ |\ g\in G\}$ is a semiregular group of automorphisms of $G$ with $G_i$ as orbits.
Thus $\{R(g)\ |\ g\in G\}$ is a subgroup of $\Aut(\Gamma)$ isomorphic to $G$.

To end this section, we give a result about the elementary abelian $2$-groups admitting O$m$SRs of valency two, see the Remark~3.5 in \cite{DuFengSpiga}.
\begin{prop}\label{prop=OmSR} $\mz_1$ admits an O$m$SR of valency two if and only if $m\geq 7$, and $\mz_2^t$ admits an O$m$SR of valency two if and only if $m\geq 3$ with $1\leq t\leq 2$.
\end{prop}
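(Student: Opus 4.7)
The plan is to separate the two claims and, within each, to handle necessity (ruling out small $m$) by arc-counting and case analysis, and sufficiency (existence for large $m$) by explicit construction. From the definitions in Section~2, a valency-two oriented $m$-semiregular digraph over $G$ is encoded by subsets $T_{i,j}\subseteq G$ with $\sum_{j}|T_{i,j}|=\sum_{j}|T_{j,i}|=2$ for every $i\in\mz_m$ and with the oriented condition $g\in T_{i,j}\Rightarrow g^{-1}\notin T_{j,i}$. When $G=\mz_1$ each $T_{i,j}$ is either empty or $\{1\}$, so the question reduces to the existence of a $2$-regular oriented digraph on $m$ vertices with trivial automorphism group, while for $G=\mz_2^t$ with $t\in\{1,2\}$ every nonidentity element is an involution, forcing $T_{i,i}\cap T_{i,i}^{-1}=\emptyset$ whenever $T_{i,i}\ne\emptyset$ and constraining the admissible patterns severely.

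For the necessity direction for $\mz_1$, I would proceed by cases on $m\in\{1,\ldots,6\}$. When $m\le 4$ a counting bound suffices: a simple oriented digraph on $m$ vertices has at most $\binom{m}{2}$ arcs while valency two demands $2m$, and $\binom{m}{2}<2m$ for these values. When $m=5$ one has $2\cdot 5=\binom{5}{2}$, so the digraph must be a regular tournament; the unique regular tournament on five vertices is the Paley (quadratic residue) tournament, whose automorphism group is the index-two subgroup of $\mathrm{AGL}(1,5)$ of order $10$. When $m=6$ exactly three pairs of vertices carry no arc, and a case analysis on the isomorphism type of the graph of non-edges, together with the orientation pattern of the remaining arcs, rules out all configurations. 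For $\mz_2^t$, the case $m=1$ is excluded because every element of a group of exponent two is self-inverse, so no $S\subseteq G\setminus\{1\}$ of size two can satisfy $S\cap S^{-1}=\emptyset$; for $m=2$ a finite enumeration of the admissible families $(T_{i,j})_{i,j\in\mz_2}$ modulo $R(G)$ shows that each such digraph carries an extra automorphism interchanging the two $G$-orbits.

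For the sufficiency direction I would exhibit explicit families. For $\mz_1$ with $m\ge 7$, take the directed Hamilton cycle $0\to 1\to\cdots\to m-1\to 0$ together with a second set of arcs $i\to\sigma(i)$ for a fixed-point-free permutation $\sigma$ of $\mz_m$ avoiding $\{i-1,i+1\}$, and embed a small asymmetric pattern (for example, a unique vertex whose ``chord neighbour'' sits at a distinguished distance along the cycle) so that the Hamilton structure is canonically recoverable and the automorphism group collapses to the identity. For $\mz_2^t$ with $m\ge 3$ the same template works, distributing one ``cyclic'' and one ``chord'' arc across the $m$ orbits and using the involution in $R(G)$ to pin vertices within orbits. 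The main obstacle in both constructions is verifying $\Aut(\G)=R(G)$: the arc-counting is immediate, but rigidity must be propagated from a canonical vertex identified by a local combinatorial invariant, and the $m=6$ subcase of the necessity half for $\mz_1$ is the most delicate since it is not settled by a pure counting contradiction and requires the full enumeration of non-edge patterns.
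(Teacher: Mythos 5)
You should first note that this paper does not prove Proposition~2.1 at all: it is imported verbatim from Remark~3.5 of the cited paper of Du, Feng et al.\ (reference \cite{DuFengSpiga}), and is then used as a black box in Lemmas~3.1 and~3.2. So there is no internal proof to match; your attempt is a from-scratch argument, and judged on those terms it is a reasonable skeleton but not yet a proof. Your reduction is correct: for $\mz_1$ the question is exactly the existence of an asymmetric oriented digraph on $m$ vertices with in- and out-valency two, and for $\mz_2^t$ the oriented condition forces $T_{i,i}\cap T_{i,i}^{-1}=\emptyset$, which over an elementary abelian $2$-group forces $T_{i,i}=\emptyset$. The counting exclusions you give are also correct for $m\le 4$ (trivial group) and $m=1$ ($\mz_2^t$).

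The genuine gaps are these. (i) \emph{Sufficiency is only a plan.} For $\mz_1$, $m\ge 7$, ``embed a small asymmetric pattern so that the automorphism group collapses'' is precisely the statement to be proved; you give neither a definite choice of the chord permutation $\sigma$ for every $m\ge 7$ nor the rigidity argument. For $\mz_2^t$, $m\ge 3$, ``the same template works'' does not even specify the sets $T_{i,j}$, and the hard part --- showing $\Aut(\G)=R(G)$, i.e.\ excluding automorphisms that permute the $m$ orbits or act nontrivially inside them --- is exactly the kind of orbit-by-orbit analysis via second out-neighbourhoods that Lemmas~3.1 and~3.2 of this paper have to carry out for the groups they treat; none of it is present here. (ii) \emph{Necessity at $m=5,6$ for $\mz_1$.} You quote, without proof or reference, that the unique $2$-regular oriented digraph on $5$ vertices is the Paley tournament; and for $m=6$ you defer to ``a case analysis on the isomorphism type of the graph of non-edges,'' whereas in fact that graph is forced (each vertex has exactly one non-neighbour, so it is a perfect matching), and what remains --- that every Eulerian orientation of $K_{2,2,2}$ admits a nontrivial automorphism --- is the actual content and is not done. (iii) \emph{$m=2$ for $\mz_2^t$.} For $\mz_2$ no enumeration is needed or possible: a valency-two oriented digraph on $4$ vertices would need $8$ arcs but only $\binom{4}{2}=6$ are available, so no admissible digraph exists; your phrasing (``each such digraph carries an extra automorphism interchanging the two orbits'') is off here. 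For $\mz_2^2$ the enumeration is genuinely small ($T_{0,0}=T_{1,1}=\emptyset$ forced, $T_{0,1}$ and $T_{1,0}$ disjoint $2$-subsets), but you assert rather than perform it, and the claim that the extra automorphism always swaps the two orbits is an unverified guess. Until (i)--(iii) are filled in, the proposal establishes only the easy counting half of the proposition.
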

%

\section{Proof of Theorem~\ref{theo=main2} }

This section aims to give the proof of Theorem~\ref{theo=main2}, which will be divided into two lemmas. In our first lemma we deal with cyclic groups.

\begin{lem}\label{lem=cyclic}
Let $G$ be a cyclic group, and let $m\geq2$ be an integer. Then
$G$ has no O$m$SR of valency two if and only if
\begin{itemize}
  \item [\rm (1)] $m=2$ and $G\cong \mz_1$ or $\mz_2$;
  \item [\rm (2)] $3\leq m\leq 6$ and $G\cong \mz_1$.
\end{itemize}
\end{lem}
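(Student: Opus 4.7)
The plan is to prove both directions of the biconditional separately.

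The ``only if'' direction is immediate from Proposition~\ref{prop=OmSR}. The excluded cyclic groups have order at most two, so they must be $\mz_1=\mz_2^0$ (excluded when $2\leq m\leq 6$) or $\mz_2=\mz_2^1$ (excluded when $m=2$), and both nonexistence statements are exactly what the proposition gives.

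For the ``if'' direction, I need to show that every cyclic group $G=\mz_n$ with $n\geq 3$ admits an O$m$SR of valency two for every $m\geq 2$. My approach is to give explicit constructions, split by the value of $m$. For $m\geq 3$, take $T_{i,i+1}=\{0\}$ for every $i\in\mz_m$ together with $T_{i,i}=\{s_i\}$ for suitable non-involutions $s_i\in G\setminus\{0\}$, and set all remaining $T_{i,j}$ empty. The between-layer arcs then form $n$ vertex-disjoint directed $m$-cycles (the ``fibers''), and within each layer the arcs form a Cayley digraph $\Cay(G,\{s_i\})$. For $m=2$ the vertical-cycle idea produces a digon between $G_0$ and $G_1$, so I take instead $T_{0,0}=\{a\}$, $T_{1,1}=\{b\}$, $T_{0,1}=\{0\}$, $T_{1,0}=\{c\}$ with $a,b,c\in G\setminus\{0\}$, $2a\neq 0$, $2b\neq 0$, and $a\neq b$.

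Given the construction, I would verify (i) $2$-regularity, (ii) the oriented condition (which reduces to the non-involution and $c\neq 0$ conditions), and (iii) the rigidity $\Aut(\G)=R(G)$. The bulk of the work, and the main obstacle, is (iii). The strategy for $m\geq 3$ is to observe that the between-layer arcs give a canonical partition of $V(\G)$ into $n$ disjoint directed $m$-cycles, forcing any $\sigma\in\Aut(\G)$ to permute the fibers and hence induce a cyclic relabeling of the $m$ layers; an asymmetric choice of the sequence $(s_0,\ldots,s_{m-1})$ (inequivalent under every nontrivial cyclic rotation) then forces $\sigma$ to stabilize each layer, and the within-layer Cayley structure, together with the constraint $T_{i,i+1}=\{0\}$ between consecutive layers, identifies $\sigma$ with a right multiplication by a single element of $G$. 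For $m=2$ the analogous argument uses the asymmetry $a\neq b$ to distinguish $G_0$ from $G_1$ and the within-layer Cayley structures to pin down $\sigma$ as a translation; shift-swap candidates $g_0\mapsto (g+r)_1$, $g_1\mapsto (g+r+c)_0$ are then ruled out precisely by $a\neq b$.

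The delicate part of the whole scheme is selecting the $s_i$'s (and $a,b,c$) when $n$ is small. For $n\in\{3,4\}$ the set of admissible non-involutions is of size two, limiting how many distinct $s_i$ one can use. Here I would designate a single ``marker'' layer with a distinguished $s_i$ value while giving the remaining layers a common $s$-value, and verify directly by case analysis that no nontrivial cyclic layer-rotation lifts to an automorphism of $\G$; a short dimension count shows one such marker always suffices to destroy every nontrivial rotation symmetry.
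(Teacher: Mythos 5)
Your ``only if'' direction is fine, but the rigidity step for $m\geq 3$ has a genuine gap. Your key claim is that the between-layer arcs yield a canonical partition of $V(\G)$ into $n$ directed $m$-cycles, so that every automorphism permutes the fibers. An automorphism, however, only sees arcs, not your labelling of them as ``fiber'' versus ``within-layer'', so this canonicity must be proved; the natural proof (``the fibers are exactly the directed $m$-cycles'') is valid only when $o(s_i)\neq m$ for every $i$. But connectivity forces $\langle s_0,\dots,s_{m-1}\rangle=G$ with every $s_i$ a non-involution, so for $G\cong\mz_p$ with $m=p$ (any prime $p\geq 3$), and likewise for $n=4,m=4$, $n=6,m=6$, etc., some layer necessarily carries an $s_i$ of order $m$, and then each such layer also decomposes into directed $m$-cycles. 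In these cases the fiber partition is not distinguished by cycle lengths and the step ``any $\sigma\in\Aut(\G)$ permutes the fibers'' is unjustified; since this happens for infinitely many pairs $(n,m)$, it cannot be absorbed into the finite ``marker plus case analysis'' you reserve for $n\in\{3,4\}$ (which addresses only the scarcity of admissible $s$-values, a different issue). The $m=2$ argument has a similar soft spot: you rule out only automorphisms of the specific affine ``shift-swap'' shape $g_0\mapsto (g+r)_1$, $g_1\mapsto(g+r+c)_0$, but you never show that an arbitrary automorphism preserves or swaps the two layers, which is the actual crux.

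For comparison, the paper avoids this issue by a different mechanism: it uses one uniform construction (wrap-around arc from $G_{m-1}$ to $G_0$ labelled $a$ rather than $1$, and within-layer elements $a$ on $G_0$, $a^{-1}$ elsewhere) and derives layer-preservation not from cycle structure but from counting the second out-neighborhoods $\G_m^+(\G_m^+(1_i))$ (sizes $4$ versus $3$), comparing the induced subdigraphs on them when $m=2$ and $o(a)\geq 5$, and checking $o(a)\in\{3,4\}$, $m=2$ by computer; once each $G_i$ is fixed setwise, the vertex stabilizer fixes out-neighborhoods pointwise and connectivity plus the Frattini argument give $\Aut(\G_m)=R(G)$. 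Your plan could likely be repaired along the same lines (e.g.\ replace the canonical-partition claim by a local invariant that separates fiber arcs from within-layer arcs, or modify the construction so the wrap-around arc is not identity-labelled), but as written the central step fails exactly in the infinite family $G\cong\mz_p$, $m=p$.
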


\begin{proof}
By Proposition~\ref{prop=OmSR}, $\mz_1$ has O$m$SR of valency two if and only if $m\geq 7$, and $\mz_2$ has O$m$SR of valency two if and only if $m\geq 3$.
Now, we are in the position to consider the cyclic groups of order at least 3.
Assume $G=\langle a\rangle$, where the order $o(a)\geq 3$.
Take $T_{i,j}\subset G$ with $i,j \in \mz_m$ as follows:
\begin{eqnarray*}
T_{0,0}=\{a\},\,\,\,\,\,\,\,\,\,\,\,\,T_{i,i}&=&\{a^{-1}\}\,\,\,\,\,\,\,\,\,\,\,\,\,\,\,\mbox{ for } i\neq 0; \\
T_{m-1,0}=\{a\},\,\,\,\,\,\,T_{i,i+1}&=&\{1\}\,\,\,\,\,\,\,\,\,\,\,\,\,\,\,\,\,\,\,\,\,\mbox{ for } i\not=m-1; \\
T_{i,j}&=&\emptyset\,\,\,\,\,\,\,\,\,\,\,\,\,\,\,\,\,\,\,\,\,\,\,\,\,\,\, \mbox{ otherwise.}
\end{eqnarray*}
Let $\G_m=\Cay(G,T_{i,j}:i,j \in \mz_m)$ and $A=\Aut(\G_m)$. Then $\G_m$ is an oriented $m$-Cayley digraph over $G$ of valency two.
Figure~\ref{Fig1} might be of some help for understanding the structure of $\G_m$.

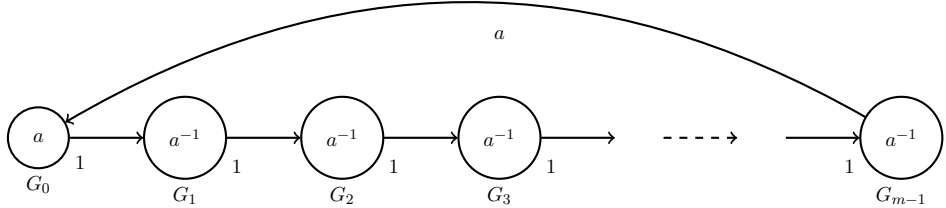
\begin{figure}[!hhh]
\begin{center}
\begin{tikzpicture}[node distance=1.4cm,thick,scale=0.7,every node/.style={transform shape}]
\node[circle](AAAA0){};
\node[left=of AAAA0,circle](AAA0){};
\node[left=of AAA0,circle](AA0){};
\node[right=of AA0,circle,inner sep=9pt, label=45:](A0){};
\node[left=of AA0,circle,inner sep=5pt, label=45:](A1){};
\node[below=of A1,circle,draw,inner sep=9pt,label=-90:$G_1$,label=-20:$1$](A2){$a^{-1}$};
\node[left=of A2,circle,draw, inner sep=9pt, label=-90:$G_0$,label=-20:$1$](A3){$a$};
\node[right=of A2,circle,draw, inner sep=9pt, label=-90:$G_2$,,label=-20:$1$](A4){$a^{-1}$};
\node[right=of A4,circle,draw, inner sep=9pt, label=-90:$G_3$,label=-20:$1$](A5){$a^{-1}$};
\node[above=of A5,circle, inner sep=9pt, label=-90:$a$](A55){};
\node[right=of A5,circle, inner sep=9pt](A6){};
\node[right=of A6,circle, inner sep=9pt](A7){};
\node[right=of A7,circle,draw, inner sep=9pt, label=-90:$G_{m-1}$,label=-160:$1$](A8){$a^{-1}$};
\draw[->](A3) to  (A2);
\draw[->](A2) to  (A4);
\draw[->](A4) to  (A5);
\draw[->](A5) to  (A6);
\draw[->](A7) to  (A8);
\draw[dashed][->] (A6) to (A7);
\draw [->] (A8) to [bend right] (A3);
\end{tikzpicture}
\end{center}
\caption{The oriented $m$-Cayley graphs of valency two for cyclic groups} \label{Fig1}
\end{figure}

It is easy to see that
\begin{eqnarray*}
\G_m^+(1_0)=\{a_0,1_{1}\}, & &  \G_m^+(1_{m-1})=\{a^{-1}_{m-1},a_0\},\\ \G_m^+(1_i)=\{a^{-1}_i,1_{i+1}\} & & \mbox{ for } i\not=0,m-1.
\end{eqnarray*}
Let $\G_m^+(\G_m^+(1_i))$
be the set of out-neighbors of $\G_m^+(1_i)$.
Then, we have
\begin{eqnarray*}
\G_2^+(\G_2^+(1_{0}))&=&\G_2^+(a_{0})\cup\G_2^+(1_1)=\{a^2_{0},a_1\}\cup \{a^{-1}_1,a_0\}=\{a_0,a^2_{0},a_1,a^{-1}_1\};\\
\G_m^+(\G_m^+(1_{0}))&=&\G_m^+(a_{0})\cup\G_m^+(1_1)=\{a^2_{0},a_1\}\cup \{a^{-1}_1,1_2\}=\{a^2_{0},a_1,a^{-1}_1,1_2\} \mbox{ for } m\geq 3;\\
\G_m^+(\G_m^+(1_{m-2}))&=&\G_m^+(a^{-1}_{m-2})\cup\G_m^+(1_{m-1})=
\{a^{-2}_{m-2},a^{-1}_{m-1}\}\cup \{a^{-1}_{m-1},a_0\}= \{a^{-2}_{m-2},a^{-1}_{m-1},a_0\};\\
\G_m^+(\G_m^+(1_{m-1}))&=&\G_m^+(a^{-1}_{m-1})\cup\G_m^+(a_{0})=\{a^{-2}_{m-1},1_0\}\cup \{a^2_0,a_1\}=\{a^{-2}_{m-1},1_0,a^2_0,a_1\};\\
\G_m^+(\G_m^+(1_i))&=&\G_m^+(a^{-1}_i)\cup\G_m^+(1_{i+1})=\{a^{-2}_i,a^{-1}_{i+1}\}\cup \{a^{-1}_{i+1},1_{i+2}\}=\{a^{-2}_i,a^{-1}_{i+1},1_{i+2}\} \mbox{  when $i \neq 0, m-2, m-1$.}
\end{eqnarray*}
Recall that $o(a)\geq3$. Then $|\G_m^+(\G_m^+(1_{0}))|=4$ and $|\G_m^+(\G_m^+(1_{i}))|=3$ for $1\leq i\leq m-2$. Moreover, $|\G_m^+(\G_m^+(1_{m-1}))|=|\{a^{-2}_{m-1},1_0,a^2_0,a_1\}|=3$ when $m=2$ and $o(a)=3$; and $|\G_m^+(\G_m^+(1_{m-1}))|=4$ otherwise. Now, we consider the cases $m=2$ and $m\geq 3$ separately.
Let $m=2$. If $o(a)=3$ or $4$, by Magma~\cite{magma}, $\G_2$ is an O$2$SR over $G$.
Assume $o(a)\geq 5$. We consider the number of arcs in the induced
sub-digraphs $[\G_2^+(\G_2^+(1_{0}))]$ and $[\G_2^+(\G_2^+(1_{1}))]$, respectively.
Note that $\G_2^+(\G_2^+(1_{0}))=\{a_0,a^2_{0},a_1,a^{-1}_1\}$.
By the definition of $\G_2$, we have
\begin{eqnarray*}
\G_2^+(a_0)=\{a^2_{0},a_1\}, & & \G_2^+(a^2_0)=\{a^3_0,a^2_1\}, \\
\G_2^+(a_1)=\{1_1,a^2_0\}, & & \G_2^+(a^{-1}_1)=\{a^{-2}_1,1_0\}.
\end{eqnarray*}
Since $o(a)\geq 5$, there are exactly three arcs in the induced sub-digraph $[\G_2^+(\G_2^+(1_{0}))]$, that is, $(a_0,a^2_0)$, $(a_0,a_1)$ and $(a_1,a^2_0)$.
On the other hand, since $\G_2^+(\G_2^+(1_{1}))=\{1_0,a^2_{0},a_1,a^{-2}_1\}$ and
\begin{eqnarray*}
\G_2^+(1_0)=\{a_{0},1_1\}, & & \G_2^+(a^2_0)=\{a^3_0,a^2_1\},\\
\G_2^+(a_1)=\{1_1,a^2_0\},  & & \G_2^+(a^{-2}_1)=\{a^{-3}_1,a^{-1}_0\},
\end{eqnarray*}
the induced sub-digraph $[\G_2^+(\G_2^+(1_{1}))]$ has only one arc, that is, $(a_1,a^2_0)$. Thus, $[\G_2^+(\G_2^+(1_{0}))]\ncong [\G_2^+(\G_2^+(1_{1}))]$, and so $A$ fixes $G_0$ and $G_1$ setwise, respectively. Therefore, $A_{1_0}$ and $A_{1_1}$ fix $\G_2^+(1_0)=\{a_0,1_1\}$ and $\G_2^+(1_1)=\{a^{-1}_1,a_0\}$
pointwise, respectively. We then conclude that $A_{1_0}=A_{1_1}=1$ as $\Gamma$ is connected. Since $R(G)\leq A$ is transitive on $G_i$ with $i\in \mz_2$, $A$ has two orbits,
and by the Frattini argument, we have $A=R(G)A_{1_0}=R(G)$. Hence $\G_2$ is an O$2$SR over $G$ of valency two.

Let $m\geq 3$. Then $|\G_m^+(\G_m^+(1_{m-1}))|=4$, and so both $G_0\cup G_{m-1}$ and $\Delta=\{G_i~|~1\leq i\leq m-2\}$ are fixed setwise by $A$. Since $\Delta$ has out-neighbors in $G_{m-1}$ while has no out-neighbor in $G_0$ (see Figure~\ref{Fig1}), we have that $A$ fixes $G_0$ and $G_{m-1}$
setwise. Recall that $T_{i,i+1}=\{1\}$ and $T_{i,j}=\emptyset$ for each $1\leq i\leq m-2$ and $j\neq i,i+1$. By considering the out-neighbors of $G_i$ in turn (see Figure~\ref{Fig1}), we notice that for each $0\leq i\leq m-1$ (write $i+1=0$ when $i=m-1$), $G_i$
has out-neighbors only in $G_i$ and $G_{i+1}$, and so it is fixed by $A$ setwise. On the other hand, since $$\G_m^+(1_0)=\{a_0,1_1\},\ \G_m^+(1_{m-1})=\{a^{-1}_{m-1},a_{0}\},\ \G_m^+(1_i)=\{a^{-1}_i,1_{i+1}\},~i\neq 0,m-1,$$
the stabilizer $A_{1_i}$ fixes $\G_m^+(1_i)$ pointwise with $i\in\mz_m$, and so $A_{1_0}=1$ as $\G$ is connected.
Again by Frattini argument, $A=R(G)A_{1_0}=R(G)$, and hence $\G_m$ is an O$m$SR over $G$ of valency two. The proof is completed.
\end{proof}

Finally, we deal with non-cyclic groups generated by two elements.

\begin{lem}\label{lem=twogenerators}
Let $G$ be a non-cyclic group generated by two elements, and let $m\geq2$ be an integer. Then $G$ has an O$m$SR of out-valency two except $(m,G)=(2,\mz_2^2)$.
\end{lem}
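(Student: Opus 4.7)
My plan is to handle the forbidden direction via Proposition~\ref{prop=OmSR} and construct an explicit digraph in all remaining cases, adapting the cyclic construction of Lemma~\ref{lem=cyclic}. When $(m,G)=(2,\mz_2^2)$, Proposition~\ref{prop=OmSR} with $t=2$ immediately yields that $G$ admits no O$2$SR of valency two. For the constructive direction, assume $(m,G)\neq(2,\mz_2^2)$. If $G\cong\mz_2^2$ then $m\geq 3$ and Proposition~\ref{prop=OmSR} applies directly, so I may assume $G\not\cong\mz_2^2$. A two-generated group of exponent two is isomorphic to $\mz_1$, $\mz_2$ or $\mz_2^2$; hence, since $G$ is non-cyclic and not $\mz_2^2$, the group $G$ contains an element $a$ with $o(a)\geq 3$. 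Fix such an $a$ together with some $b\in G$ satisfying $G=\langle a,b\rangle$.

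Next I would write down the oriented $m$-Cayley digraph $\G_m=\Cay(G,T_{i,j}:i,j\in\mz_m)$ defined by
\[
T_{0,0}=\{a\},\quad T_{i,i}=\{a^{-1}\}\ (1\leq i\leq m-1),\quad T_{i,i+1}=\{1\}\ (0\leq i\leq m-2),\quad T_{m-1,0}=\{b\},
\]
with all other $T_{i,j}$ empty. Since $o(a)\geq 3$, the digraph $\G_m$ is oriented and regular of out-valency two, and connectedness follows from $G=\langle a,b\rangle$. To show $A:=\Aut(\G_m)=R(G)$, I would follow the template of Lemma~\ref{lem=cyclic}: compute the out-neighbourhoods $\G_m^+(1_i)$ and second-level sets $\G_m^+(\G_m^+(1_i))$ together with the arcs of the induced sub-digraphs, and compare these invariants across the blocks $G_0,\ldots,G_{m-1}$ to force $A$ to preserve each $G_i$ setwise. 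Here the $b$-arc closing the chain plays exactly the symmetry-breaking role that the extra $a$-arc $T_{m-1,0}=\{a\}$ played in the cyclic proof. Once the blocks are fixed, each stabiliser $A_{1_i}$ must fix $\G_m^+(1_i)$ pointwise; propagating via connectedness gives $A_{1_0}=1$, and the Frattini argument then yields $A=R(G)A_{1_0}=R(G)$.

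The main obstacle I anticipate is the case $m=2$ together with small $o(a)$: the chain is then trivially short and the second-level invariants may fail to separate $G_0$ from $G_1$. The way out is to exploit the genuine distinction between the generators---if $b\in\langle a\rangle$ the group would be cyclic, so $b\notin\langle a\rangle$, and the induced sub-digraphs on small neighbourhoods of $G_0$ and $G_1$ carry distinguishing arcs determined by the position of $b$ relative to the powers of $a$ appearing in the $a$-loops. Residual small-group cases (mirroring the $o(a)\in\{3,4\}$ Magma check used in Lemma~\ref{lem=cyclic}) can be dispatched by direct computation.
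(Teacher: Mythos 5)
Your overall strategy (handle $\mz_2^2$ by Proposition~\ref{prop=OmSR}, otherwise build an explicit oriented $m$-Cayley digraph and force $\Aut(\G_m)=R(G)$ by local invariants plus connectivity and a Frattini argument) is the right one, but the single construction you propose --- the cyclic template $T_{0,0}=\{a\}$, $T_{i,i}=\{a^{-1}\}$ $(i\neq 0)$, $T_{i,i+1}=\{1\}$, $T_{m-1,0}=\{b\}$ --- genuinely fails when $m=2$ and $G$ is abelian, and this is exactly the case your sketch waves at. Take $G=\langle a\rangle\times\langle b\rangle$ with $o(a)\geq 3$ (for instance $\mz_n\times\mz_n$, $n\geq 3$), and let $\alpha$ be the automorphism of $G$ with $\alpha(a)=a^{-1}$, $\alpha(b)=b$. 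Then the map $\phi$ defined by $\phi(g_0)=(\alpha(g))_1$ and $\phi(g_1)=(b\alpha(g))_0$ is an automorphism of your $\G_2$: the arc $(g_0,(ag)_0)$ goes to an $a^{-1}$-arc inside $G_1$, the arc $(g_0,g_1)$ goes to a $b$-arc from $G_1$ to $G_0$, the arc $(g_1,(a^{-1}g)_1)$ goes to an $a$-arc inside $G_0$ (using $ab=ba$), and the arc $(g_1,(bg)_0)$ goes to a $1$-arc from $G_0$ to $G_1$. Since $\phi$ interchanges $G_0$ and $G_1$, we get $\Aut(\G_2)\neq R(G)$; correspondingly the radius-two balls around $1_0$ and $1_1$ in your digraph are isomorphic (each is a directed ``diamond'' with three pendant out-neighbours attached), so no second-neighbourhood invariant can separate the blocks. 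This is not a ``residual small-group case'' to be dispatched by Magma for small $o(a)$: it is an infinite structural family, and it is precisely why the paper uses two different connection sets --- loops $ab$ on $G_i$ ($i\neq 0$) and $a$ on $G_0$ in the abelian case (where the two balls in Figure~\ref{Fig3} are non-isomorphic), and loops $a$ on every block in the non-abelian case, where $ab\neq ba$ makes $|\G_m^+(\G_m^+(1_{m-1}))|=4$.

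Two smaller points. First, your reduction is misstated: from ``$G$ is not of exponent two'' you conclude that an element $a$ of order at least $3$ can be completed to a generating pair, but an arbitrary element of order $\geq 3$ need not lie in any two-element generating set (e.g.\ the element of order $3$ in $\mz_2\times\mz_6$); you need the paper's replacement argument, passing from a generating pair $\{a,b\}$ to $\{a,ab\}$ or $\{b,ab\}$, to guarantee a generating pair whose first member has order at least $3$. Second, even where your construction does work, the advertised ``template'' needs adjustment: when $ab=ba^{-1}$ (dihedral-type pairs) one has $|\G_m^+(\G_m^+(1_{m-1}))|=3$, so only $G_0$ is singled out by the size count and the block-fixing argument must be rerun from $G_0$ alone. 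That, and the $m\geq 3$ cases, are repairable; the $m=2$ abelian case is not repairable without changing the connection sets, so as it stands the proposal does not prove the lemma.
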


\begin{proof}
Let $G=\langle a,b\rangle$ with $o(a)\geq o(b)\geq 2$. Then $G=\langle a,ab\rangle$.
If all of $a$, $b$ and $ab$ are involutions, then $G=\mz_2^2$,
and it has an O$m$SR of valency two if and only if $m\geq3$ by Proposition~\ref{prop=OmSR}. In the following, we assume that at least one of $a$, $b$ and $ab$ has order at least 3. Without loss of generality, we assume $o(a)\geq 3$. In the following, we divide the proof into two cases.

\medskip
{\f\bf Case 1:} $G$ is abelian.
\medskip

Take $T_{i,j}\subset G$ with $i,j \in \mz_m$ as follows:
\begin{eqnarray*}
T_{0,0}=\{a\},\,\,\,\,\,\,\,\,\,\,\,\,T_{i,i}&=&\{ab\}\,\,\,\,\,\,\,\,\,\,\,\,\,\,\,\,\,\,\mbox{ for } i\neq 0; \\ T_{m-1,0}=\{b\},\,\,\,\,\,\, T_{i,i+1}&=&\{1\}\,\,\,\,\,\,\,\,\,\,\,\,\,\,\,\,\,\,\,\,\,\mbox{ for } i\not=m-1; \\
T_{i,j}&=&\emptyset\,\,\,\,\,\,\,\,\,\,\,\,\,\,\,\,\,\,\,\,\,\,\,\,\,\,\, \mbox{ otherwise.}
\end{eqnarray*}
Let $\G_m=\Cay(G,T_{i,j}:i,j \in \mz_m)$ and $A=\Aut(\G_m)$. Then $\G_m$ is an oriented $m$-Cayley digraph over $G$ of valency two.
Figure~\ref{Fig2} might be of some help for understanding the structure of $\G_m$.

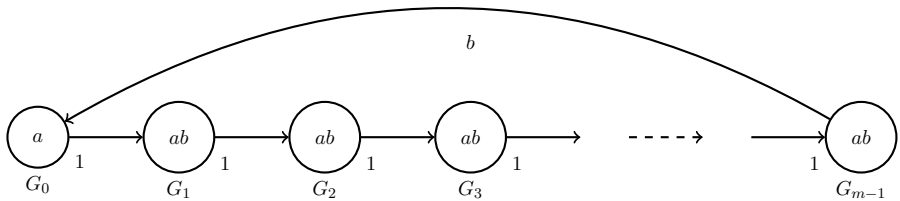
\begin{figure}[!hhh]
\begin{center}
\begin{tikzpicture}[node distance=1.4cm,thick,scale=0.7,every node/.style={transform shape}]
\node[circle](AAAA0){};
\node[left=of AAAA0,circle](AAA0){};
\node[left=of AAA0,circle](AA0){};
\node[right=of AA0,circle,inner sep=9pt, label=45:](A0){};
\node[left=of AA0,circle,inner sep=5pt, label=45:](A1){};
\node[below=of A1,circle,draw,inner sep=9pt,label=-90:$G_1$,label=-20:$1$](A2){$ab$};
\node[left=of A2,circle,draw, inner sep=9pt, label=-90:$G_0$,label=-20:$1$](A3){$a$};
\node[right=of A2,circle,draw, inner sep=9pt, label=-90:$G_2$,,label=-20:$1$](A4){$ab$};
\node[right=of A4,circle,draw, inner sep=9pt, label=-90:$G_3$,label=-20:$1$](A5){$ab$};
\node[above=of A5,circle, inner sep=9pt, label=-90:$b$](A55){};
\node[right=of A5,circle, inner sep=9pt](A6){};
\node[right=of A6,circle, inner sep=9pt](A7){};
\node[right=of A7,circle,draw, inner sep=9pt, label=-90:$G_{m-1}$,label=-160:$1$](A8){$ab$};
\draw[->](A3) to  (A2);
\draw[->](A2) to  (A4);
\draw[->](A4) to  (A5);
\draw[->](A5) to  (A6);
\draw[->](A7) to  (A8);
\draw[dashed][->] (A6) to (A7);
\draw [->] (A8) to [bend right] (A3);
\end{tikzpicture}
\end{center}
\caption{The oriented $m$-Cayley graphs of valency two for abelian groups} \label{Fig2}
\end{figure}

Note that
\begin{eqnarray*}
\G_m^+(1_0)=\{a_0,1_{1}\},\,\,\,\,\,\,\,\,\,\, & &  \,\,\G_m^+(1_{m-1})=\{(ab)_{m-1},b_0\},\\ \G_m^+(1_i)=\{(ab)_i,1_{i+1}\} & & \mbox{ for } i\not=0,m-1.
\end{eqnarray*}
Similar as the proof of Lemma~\ref{lem=cyclic}, we consider the set
$\G_m^+(\G_m^+(1_i))$ for each $i\in\mz_m$.
By the definition of $\G_m$, we have
\begin{eqnarray*}
\G_2^+(\G_2^+(1_{0}))&=&\G_2^+(a_{0})\cup\G_2^+(1_1)=\{a^2_{0},a_1,(ab)_1,b_0\};\\
\G_m^+(\G_m^+(1_{0}))&=&\G_m^+(a_{0})\cup\G_m^+(1_1)=\{a^2_{0},a_1,(ab)_1,1_2\} \mbox{ for } m\geq 3;\\
\G_m^+(\G_m^+(1_{m-2}))&=&\G_m^+((ab)_{m-2})\cup\G_m^+(1_{m-1})=\{(ab)^2_{m-2},(ab)_{m-1},b_0\};\\
\G_m^+(\G_m^+(1_{m-1}))&=&\G_m^+((ab)_{m-1})\cup\G_m^+(b_{0})=\{(ab)^2_{m-1},(ab^2)_0,(ab)_0,b_1\};\\
\G_m^+(\G_m^+(1_i))&=&\G_m^+((ab)_i)\cup\G_m^+(1_{i+1})=\{(ab)^2_i,(ab)_{i+1},1_{i+2}\} \mbox{ when $i \neq 0, m-2, m-1$.}
\end{eqnarray*}
Since $o(b)\geq 2$, we have $ab^2\neq ab$, and so  $|\G_m^+(\G_m^+(1_{0}))|=|\G_m^+(\G_m^+(1_{m-1}))|=4$ and
$|\G_m^+(\G_m^+(1_{i}))|=3$ for $i\neq0,m-1$. It follows that
$A$ fixes $G_0\cup G_{m-1}$ setwise.

First, assume $m=2$. Since $G=\langle a,b\rangle$, we have that $\G_m$
is a connected digraph. Let $\G'(1_i)$ be the set of vertices of distance at most 2 from $1_i$, that is, $\G'(1_i)=\{1_i\}\cup \G_2^+(1_i)\cup \G_2^+(\G_2^+(1_i))$ with $i\in\mz_2$.
We consider the induced sub-digraphs $[\G'(1_0)]$ and $[\G'(1_1)]$, respectively.
Note that $\G_2^+(\G_2^+(1_{0}))=\{a^2_{0},a_1,(ab)_1,b_0\}$ and $\G_2^+(\G_2^+(1_{1}))=\{(ab)^2_{1},(ab^2)_0,(ab)_0,b_1\}$.
By the definition of $\G_2$, we have
\begin{eqnarray*}
\G_2^+(a^2_0)=\{a^3_0,a^2_1\},& &  \G_2^+(a_1)=\{(a^2b)_1,(ba)_0\}, \\
\G_2^+((ab)_1)=\{(ab)^2_1,(ab^2)_0\},& & \G_2^+(b_0)=\{(ab)_{0},b_1\}, \\
\G_2^+((ab)^2_1)=\{(ab)^3_1,(a^2b^3)_0\}, & & \G_2^+((ab^2)_0)=\{(ab)^2_{0},(ab^2)_1\},\\
\G_2^+((ab)_0)=\{(a^2b)_0,(ab)_1\},  & & \G_2^+(b_1)=\{(a^2b)_1,b^2_0\}.
\end{eqnarray*}
One may see Figure~\ref{Fig3} for the induced sub-digraphs $[\G'(1_0)]$ and $[\G'(1_1)]$,
and obviously, $[\G'(1_0))]\ncong [\G'(1_1)]$.
It yields that $A$ fixes $G_0$ and $G_1$ setwise, respectively.
Furthermore, the stabilizer $A_{1_0}$ fixes $\G_2^+(1_0)=\{a_0,1_1\}$ pointwise, while $A_{1_1}$ fixes $\G_2^+(1_1)=\{(ab)_1,b_0\}$ pointwise.
The connectivity of $\G$ forces $A_{1_0}=A_{1_1}=1$.
Since $R(G)\leq A$ is transitive on $G_i$ with $i\in \mz_2$, we have $A=R(G)A_{1_0}=R(G)$, and hence $\G_2$ is an O$2$SR over $G$ of valency two.

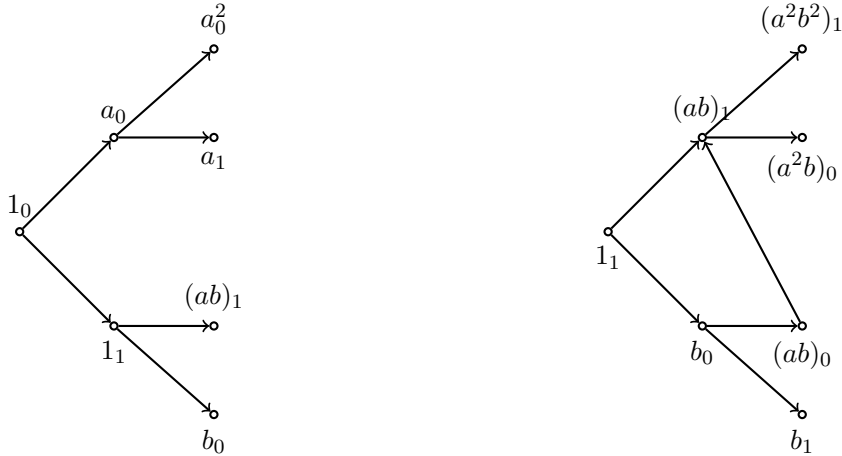
\begin{figure}[!ht]
\begin{tikzpicture}[node distance=1.1cm,thick,scale=0.6,every node/.style={transform shape},scale=1.6]
\node[circle](A01){};
\node[right=of A01, circle,draw,inner sep=1pt, label=above:{$1_0$}](A0){};
\node[right=of A0](B0){};
\node[below=of B0, circle,draw, inner sep=1pt, label=below:{$1_1$}](B2){};
\node[above=of B0, circle,draw, inner sep=1pt, label=above:{$a_0$}](B1){};
\node[right=of B0](C0){};
\node[below=of C0, circle,draw, inner sep=1pt, label=above:{$(ab)_1$}](C3){};
\node[below=of C3, circle,draw, inner sep=1pt, label=below:{$b_0$}](C4){};
\node[above=of C0, circle,draw, inner sep=1pt, label=below:{$a_1$}](C1){};
\node[above=of C1, circle,draw, inner sep=1pt, label=above:{$a^2_0$}](C2){};

\draw[->] (A0) to (B1);
\draw[->] (A0) to (B2);
\draw[->] (B1) to (C1);
\draw[->] (B1) to (C2);
\draw[->] (B2) to (C3);
\draw[->] (B2) to (C4);

\node[right=of C0](C02){};
\node[right=of C02](C03){};
\node[right=of C03](C01){};
\node[right=of C01, circle,draw, inner sep=1pt, label=below:{$1_1$}](A01){};
\node[right=of A01](B01){};
\node[below=of B01, circle,draw, inner sep=1pt, label=below:{$b_0$}](B21){};
\node[above=of B01, circle,draw, inner sep=1pt, label=above:{$(ab)_1$}](B11){};
\node[right=of B01](C01){};
\node[below=of C01, circle,draw, inner sep=1pt, label=below:{$(ab)_0$}](C31){};
\node[below=of C31, circle,draw, inner sep=1pt, label=below:{$b_1$}](C41){};
\node[above=of C01, circle,draw, inner sep=1pt, label=below:{$(a^2b)_0$}](C11){};
\node[above=of C11, circle,draw, inner sep=1pt, label=above:{$(a^2b^2)_1$}](C21){};

\draw[->] (A01) to (B11);
\draw[->] (A01) to (B21);
\draw[->] (B11) to (C11);
\draw[->] (B11) to (C21);
\draw[->] (B21) to (C31);
\draw[->] (B21) to (C41);
\draw[->] (C31) to (B11);
\end{tikzpicture}
\caption{The induced sub-digraphs $[\G'(1_0)]$ and $[\G'(1_1)]$}\label{Fig3}
\end{figure}

Next, let $m\geq 3$. Then $A$ fixes both $G_0\cup G_{m-1}$ and $\Delta=\{G_i~|~1\leq i\leq m-2\}$ setwise. Since $\Delta$ has out-neighbors in
$G_{m-1}$ but no out-neighbor in $G_0$, we conclude that $A$ fixes $G_0$ and $G_{m-1}$ setwise. Recalling that $T_{i,i+1}=\{1\}$ and $T_{i,j}=\emptyset$ for each $1\leq i\leq m-2$ and $j\neq i,i+1$ (see Figure~\ref{Fig2}), we can conclude that $A$ fixes $G_i$ setwise for each $1\leq i\leq m-2$. On the other hand, since
$$\G_m^+(1_{0})=\{a_{0},1_1\},\ \G_m^+(1_{m-1})=\{(ab)_{m-1},b_{0}\},\ \G_m^+(1_i)=\{(ab)_i,1_{i+1}\},~i\neq m-1,$$ we have $A_{1_i}$ fixes $\G_m^+(1_i)$ pointwise, and so $A_{1_0}=A_{1_1}=1$ as $\G$ is connected.
Since $R(G)\leq A$ is transitive on $G_i$ with $i\in \mz_m$, we conclude that $A=R(G)A_{1_0}=R(G)$ and $\G_m$ is an O$m$SR over $G$ of valency two.

\medskip
{\f\bf Case 2:} $G$ is non-abelian.
\medskip

Take $T_{i,j}\subset G$ with $i,j \in \mz_m$ as follows:
\begin{eqnarray*}
T_{i,i}&=&\{a\}\,\,\,\,\,\,\,\,\,\,\,\,\,\,\,\,\,\,\,\,\,\mbox{ for } i\in\mz_m; \\ T_{m-1,0}=\{b\},\,\,\,\,\,\, T_{i,i+1}&=&\{1\}\,\,\,\,\,\,\,\,\,\,\,\,\,\,\,\,\,\,\,\,\,\mbox{ for } i\not=m-1; \\
T_{i,j}&=&\emptyset\,\,\,\,\,\,\,\,\,\,\,\,\,\,\,\,\,\,\,\,\,\,\,\,\,\,\, \mbox{ otherwise.}
\end{eqnarray*}
Let $\G_m=\Cay(G,T_{i,j}:i,j \in \mz_m)$ and $A=\Aut(\G_m)$. Then $\G_m$ is an oriented $m$-Cayley digraph over $G$ of valency two.
Figure~\ref{Fig4} might be of some help for understanding the structure of $\G_m$.

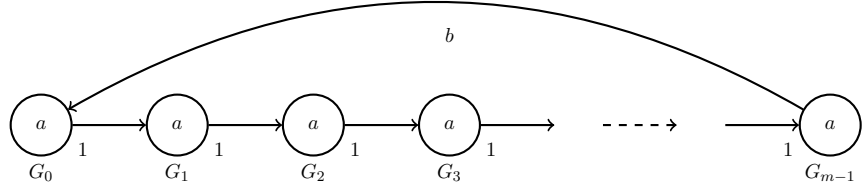
\begin{figure}[!hhh]
\begin{center}
\begin{tikzpicture}[node distance=1.4cm,thick,scale=0.7,every node/.style={transform shape}]
\node[circle](AAAA0){};
\node[left=of AAAA0,circle](AAA0){};
\node[left=of AAA0,circle](AA0){};
\node[right=of AA0,circle,inner sep=9pt, label=45:](A0){};
\node[left=of AA0,circle,inner sep=5pt, label=45:](A1){};
\node[below=of A1,circle,draw,inner sep=9pt,label=-90:$G_1$,label=-20:$1$](A2){$a$};
\node[left=of A2,circle,draw, inner sep=9pt, label=-90:$G_0$,label=-20:$1$](A3){$a$};
\node[right=of A2,circle,draw, inner sep=9pt, label=-90:$G_2$,,label=-20:$1$](A4){$a$};
\node[right=of A4,circle,draw, inner sep=9pt, label=-90:$G_3$,,label=-20:$1$](A5){$a$};
\node[above=of A5,circle, inner sep=9pt, label=-90:$b$](A55){};
\node[right=of A5,circle, inner sep=9pt](A6){};
\node[right=of A6,circle, inner sep=9pt](A7){};
\node[right=of A7,circle,draw, inner sep=9pt, label=-90:$G_{m-1}$,label=-160:$1$](A8){$a$};
\draw[->](A3) to  (A2);
\draw[->](A2) to  (A4);
\draw[->](A4) to  (A5);
\draw[->](A5) to  (A6);
\draw[->](A7) to  (A8);
\draw[dashed][->] (A6) to (A7);
\draw [->] (A8) to [bend right] (A3);
\end{tikzpicture}
\end{center}
\caption{The oriented $m$-Cayley graphs of valency two for non-abelain groups generated by two elements} \label{Fig4}
\end{figure}

Similarly, we consider $\G_m^+(\G_m^+(1_i))$, the set of out-neighbors of $\G_m^+(1_i)$.
Since
\begin{eqnarray*}
\G_m^+(1_{m-1})=\{a_{m-1},b_0\}, & & \G_m^+(1_i)=\{a_i,1_{i+1}\}\,\,\,\,\,\, \mbox{ for } i\not=m-1,
\end{eqnarray*}
we have
\begin{eqnarray*}
\G_m^+(\G_m^+(1_{m-2}))&=&\G_m^+(a_{m-2})\cup\G_m^+(1_{m-1})=\{a^2_{m-2},a_{m-1}\}\cup \{a_{m-1},b_0\}=\{a^2_{m-2},a_{m-1},b_0\}, \\
\G_m^+(\G_m^+(1_{m-1}))&=&\G_m^+(a_{m-1})\cup\G_m^+(b_{0})=\{a^2_{m-1},(ba)_0\}\cup \{(ab)_0,b_1\}=\{a^2_{m-1},(ab)_{0},(ba)_0,b_1\}, \\
\G_m^+(\G_m^+(1_i))&=&\G_m^+(a_i)\cup\G_m^+(1_{i+1})=\{a^2_i,a_{i+1}\}\cup \{a_{i+1},1_{i+2}\}=\{a^2_i,a_{i+1},1_{i+2}\} \,\,\,\mbox{when $i \neq 0, m-2, m-1$.}
\end{eqnarray*}
Since $(ab)_0\neq (ba)_0$ because $G$ is non-abelian, $|\G_m^+(\G_m^+(1_{m-1}))|=4$ and $|\G_m^+(\G_m^+(1_i))|=3$ for each $i\neq m-1$.
Therefore, $A$ fixes $G_{m-1}$ setwise.
Since $G_i$ only has out-neighbors in $G_{i+1}$ for each $0\leq i\leq m-1$ (write $i+1=0$ when $i=m-1$), it is fixed by $A$ setwise.
Recall that $\G_m^+(1_{m-1})=\{a_{m-1},b_{0}\}$ and $\G_m^+(1_i)=\{a_i,1_{i+1}\}$ for each $i\neq m-1$.
Thus, $A_{1_i}$ fixes $\G_m^+(1_i)$ pointwise, and so $A_{1_0}=1$ as $\G$ is connected.
Since $R(G)\leq A$ is transitive on $G_i$ with $i\in \mz_m$, we have $A=R(G)A_{1_0}=R(G)$ and hence $\G_m$ is an O$m$SR over $G$ of valency two. The proof is completed.
\end{proof}

\f {\bf Acknowledgement:} The first author was supported by the National Natural Science Foundation of China (12101601,12161141005), the Natural Science Foundation of Jiangsu Province, China (BK20200627) and by the China Postdoctoral Science Foundation (2021M693423). The second author was supported by the Basic Science Research Program through the National Research Foundation of Korea (NRF) funded by the Ministry of Education (2018R1D1A1B05048450). The third author was supported by the National Natural Science Foundation of China (12101070,1201101021,11731002,12161141005).

\end{document}